\documentclass{amsart2010}
\usepackage{graphicx}
\usepackage{amsmath}
\usepackage{amsfonts}
\usepackage{amssymb}
\usepackage{xcolor}
\usepackage{biblatex}
\newtheorem{thm}{Theorem}

\newtheorem{lem}[thm]{Lemma}
\newtheorem{prop}[thm]{Proposition}
\theoremstyle{definition}
\newtheorem{df}[thm]{Definition}

\newtheorem*{thm*}{Theorem}
\newtheorem*{df*}{Definition}
\newtheorem*{lem*}{Lemma}

\theoremstyle{remark}

\numberwithin{equation}{section}

\newcommand{\Ber}{\mathcal{A}^{2}_{\alpha}(\mathbb C_+)}
\newcommand{\Berd}{\mathcal{A}^{2}_{\alpha}(\mathbb D)}
\newcommand{\re}{{\rm Re}\,}
\newcommand{\im}{{\rm Im}\,}
\newcommand{\spa}{{\rm span}\,}

\title{Cyclic phenomena for composition operators on weighted Bergman spaces}

\author{Artur Blois }
\address[A. Blois]{IMECC, Universidade Estadual de Campinas, Campinas, Brazil}
\email{blois@ime.unicamp.br}

\subjclass[2020]{Primary: 47A16, 47B33, Secundary: 30H20}

\keywords{Supercyclicity, cyclicity, Composition operators, weighted Bergman spaces}

\begin{document}

\maketitle
\begin{abstract}
In this article, we study cyclic phenomena on the full range of weighted Bergman spaces on the right half-plane $\Ber.$ We completely characterize cyclic and supercyclic composition operators induced by affine self-maps of $\mathbb C_+,$ thereby completing the characterization of all hypercyclic, supercyclic and cyclic composition operators induced by affine symbols on $\Ber.$
\end{abstract}
\section{Introduction}
The study of cyclic phenomena has been one of the central research topics in linear dynamics in recent decades, more specifically, hypercyclic, supercyclic and cyclic operators. Let $\mathcal{X}$ be a separable Banach space. If $T$ is a linear operator acting on  $\mathcal{X},$ we say that $T$ is \textit{hypercyclic} if there exists a non-zero vector $f \in \mathcal{X}$ such that the orbit $\{T^nf: n \in \mathbb N\}$ is dense $\mathcal{X}.$ The first example of hypercyclic operator on a Banach space is due to Rolewicz \cite{Rolewicz} on the sequence space $\ell^p,$ for $1\leq p < \infty.$ Prior to Rolewicz, the only examples of hypercyclic operators were on the Fréchet space of entire functions due to Birkhoff and MacLane \cite{Birkhoff,Maclane}. Since then, many advances have been made on the topic and we suggest to the reader the recent monograph \cite{Bayart-Matheron} for a more in-depth view of hypercyclic operators. 

A weaker concept of hypercyclic operators are the supercyclic operators. Recall that if $T$ is linear operator on $\mathcal{X},$ it is said to be \textit{supercyclic} if there exist a non-zero vector $f \in \mathcal{X}$ such that the projective orbit $\{\lambda T^n f: n \in \mathbb N\,, \lambda \in \mathbb C\}$ is dense in $\mathcal{X}.$ It is easy to see that every hypercyclic operator is supercyclic, however the converse is not in general true. It was first introduced by Hilden and Wallen in \cite{HW} and since then many researchers have been interested in the subject, for instance, \cite{Ansari-Bourdon,BMhyponormal,GG-MR,Kumar,MR-Shkarin}. Another important definition is of $n$-\textit{supercyclicity}, which instead of asking for the projective orbit of vector being dense, we ask for the projective orbit of a $n$-dimensional vector space, for $n\geq 2,$ to be dense in $\mathcal{X}.$ Is worth noting that every supercyclic operator is $n$-supercylic but there exists a $n$-supercyclic operator which is not supercyclic \cite{Feldman}.  

Also, a linear operator $T$ is said to be \textit{cyclic} on $\mathcal{X}$ if there exists a non-zero vector $f$ such that $\spa \{T^nf:n \in \mathbb N\}$ is dense in $\mathcal{X}.$ Is easy to see that every supercyclic operator is cyclic, but again the converse is not true in general.

In the setting of spaces of holomorphic functions, one of the most studied classes of operators is of composition operators. Let $\Omega \subset \mathbb C$ be a domain in the complex plane and let $\mathcal{S}(\Omega)$ be a space of holomorphic functions on $\Omega$. We define the composition operator with holomorphic symbol $\phi:\Omega \rightarrow \Omega$ by
\[C_{\phi}: \mathcal{S}(\Omega) \rightarrow \mathcal{S}(\Omega), \quad f \mapsto f \circ \phi, \quad f \in \mathcal{S}(\Omega).\]
While studying spaces of holomorphic functions, one of the most studied symbols are the linear fractional symbols, that is, functions of the form
\[\eta(z)=\dfrac{az+b}{cz+d},\]
whose coefficients satisfy $ad-bc \neq 0$ and $\eta(\Omega) \subseteq \Omega.$ For the weighted Dirichlet space of the disk (which includes the classical Hardy, Bergman and Dirichlet spaces), the recent monograph by Gallardo-Gutiérrez and Montes-Rodríguez \cite{GG-MRbook} completely characterize all cyclic phenomena for linear fractional symbols.

In \cite{Noor-Severiano}, Noor and Severiano studied hypercyclicity and cyclicity for composition operators induced by linear fractional symbols on the Hardy space of right half-plane, $H^2(\mathbb C_+),$ and showed that no composition operator is hypercyclic but in some cases they are cyclic, leaving the question open if there were any supercyclic composition operators. Recently, Kumar \cite{Kumar} showed that $H^2(\mathbb C_+)$ supports no supercyclic composition operator induced by linear fractional symbols, however due to minor technical mistake the conclusion is incorrect. The aim of this paper is to further extend the study of composition operators on $\Ber$ started in \cite{Blois-Severiano}, to correct and extend the proof of supercyclicity given in \cite{Kumar} and completely characterize $n$-supercyclicity and cyclicity among said operators.

This article is organized as follows: Section \ref{Preliminaires} will be dedicated to the preliminaries, Section \ref{supercyclicity} we characterize supercyclicity, Section \ref{nsupercyclicity} we characterize $n$-supercyclicity and Section \ref{cyclicity} we characterize cyclicity.

\section{Preliminaries}\label{Preliminaires}
Throughout this paper $\mathbb N$ denotes the set of all non-negative integers, $\mathbb{C_+}$ the open right-half plane $\{z \in \mathbb C:\re(z)>0\},$ $\mathbb D$ the open unit disk $\{z \in \mathbb C : |z|<1\},$ $\mathbb R$ denotes the set of the real numbers and $\mathbb R^+$ denotes the set of positive real numbers.

For $\alpha > -1,$ we define the weighted Bergman space over the right-half plane, denoted by $\Ber,$ as the space of all holomorphic functions $f$ on $\mathbb C_+$ such that the norm
\[\|f\|=\left(\int_{\mathbb C_+}|f(x+iy)|^2x^{\alpha}dxdy \right)^{1/2}\]
is finite. In particular, $\Ber$ is a reproducing kernel Hilbert space, where the family $\{k^{\alpha}_w:w \in \mathbb C_+\}$ given by
\[k^{\alpha}_w(s)=\dfrac{2^{\alpha}(\alpha+1)}{(s+\overline{w})^{(\alpha+2)}}, \quad s \in \mathbb C_+,\]
are the reproducing kernels. These kernels define a continuous evaluation functional for all $f \in \Ber.$ These spaces are closely related to the Hardy space over the right half-plane, in fact, it is well-known in the literature that the ordinary Hardy space $H^{2}(\mathbb C_+)$ can be seen as the ``limit case" as $\alpha \rightarrow -1$ (see for instance \cite{Elliot-Wynn}). Similarly, we define the weighted Bergman space over the unit disk, denoted by $\Berd,$ as the space of all holomorphic functions $f$ on $\mathbb D$ such that the norm
\[\|f\|=\left(\int_{\mathbb D}|f(z)|(1-|z|^2)^{\alpha}dA(z) \right)^{1/2}\]
is finite, where $dA$ denotes the normalized area measure on $\mathbb D.$
The weighted Bergman spaces have a canonical isomorphism between $\Berd$ and $\Ber$ given by 
\begin{equation}\label{isomorphism}
    J:\mathcal{A}^{2}_{\alpha}(\mathbb D) \rightarrow \Ber, \quad (Jf)(w)=f\left(\dfrac{w-1}{w+1}\right)\dfrac{2^{\alpha+1}}{(w+1)^{\alpha+2}}.
\end{equation}

There exists a version of the Paley-Wiener theorem for $\Ber:$ Let $\mu_\alpha$ be the weighted Lebesgue measure given by
\[d\mu_\alpha=\dfrac{\pi\Gamma(1+\alpha)}{2^{\alpha}t^{\alpha+1}}dt,\]
where $\Gamma$ denotes the usual gamma function and $dt$ is the usual Lebesgue measure on $\mathbb R^+.$ Let $\mathcal{L}$ denote the Laplace transform given by
\[(\mathcal{L}F)(w)=\int_{0}^{\infty}F(t)e^{-wt}dt, \quad w \in \mathbb C_+.\]
By \cite[Lemma 3.3]{Elliot-Wynn}, $\Ber$ is isometrically isomorphic to $L^2_{\alpha}:=L^2(\mathbb R^+,d\mu_\alpha)$ via the Laplace transform. So $f \in \Ber$ if and only if there exists $F \in L^2_{\alpha}$ such that
\begin{equation}
    f(w)=\int_{0}^{\infty}F(t)e^{-wt}dt, \quad w \in \mathbb C_+.
\end{equation}

Our focus on this article are the composition operators, Elliot and Wynn \cite{Elliot-Wynn} proved the following boundedness criterion for composition operators on $\Ber$ \cite[Theorem 3.4]{Elliot-Wynn}
\begin{thm}
Let $\phi$ be a holomorphic self-map of $\mathbb C_+,$ then $C_{\phi}$ is bounded on $\Ber$ if and only if $\phi$ has finite angular derivative at $\infty,$ that is, $\phi(\infty)=\infty$ and the non-tangential limit
\[\phi^{\prime}(\infty)=\lim_{w \rightarrow \infty}\dfrac{w}{\phi(w)}\]
exists and is finite.
\end{thm}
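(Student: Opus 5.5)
I would prove the two directions separately, working throughout with the Poisson-type Carleson measure description of $\Ber$. The norm is an $L^2$ norm against the weight $x^{\alpha}\,dx\,dy$. So boundedness of $C_\phi$ is equivalent to the pullback measure $\nu=(x^{\alpha}dA)\circ\phi^{-1}$ being a Carleson measure for $\Ber$. Such a measure must satisfy $\nu(Q)\lesssim |Q|^{\alpha+2}$ on Carleson boxes $Q=\{x+iy:0<x<h,\ |y-y_0|<h\}$. This reduces the theorem to a geometric comparison of $\phi^{-1}(Q)$ with boxes of comparable size. The tool making this comparison possible is the Julia--Carathéodory theorem at the boundary point $\infty$ of $\mathbb C_+$. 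It says that for a holomorphic self-map, $\lim_{w\to\infty}\re\phi(w)/\re w$ always exists in $[0,\infty)$ along non-tangential approach. It also gives Julia's inequality $\re\phi(w)\ge c\,\re w$ for all $w$, where $c=\inf_w \re\phi(w)/\re w\ge 0$. Moreover, $c>0$ holds exactly when $\phi(\infty)=\infty$ with finite angular derivative, in which case $\phi'(\infty)=1/c$.

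\emph{Necessity.} Assume $C_\phi$ is bounded. Then its adjoint satisfies $C_\phi^{*}k^{\alpha}_w=k^{\alpha}_{\phi(w)}$, and $\|k^{\alpha}_w\|^2=k^{\alpha}_w(w)=2^{\alpha}(\alpha+1)(2\re w)^{-(\alpha+2)}$. Boundedness therefore gives
\[(\re w)^{\alpha+2}\le C\,(\re\phi(w))^{\alpha+2}\quad\text{for all } w\in\mathbb C_+,\]
that is, $\re\phi(w)\ge c\,\re w$ with $c>0$. Letting $w\to\infty$ along the positive real axis gives $\re\phi(w)\to\infty$, hence $\phi(\infty)=\infty$. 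By the Julia--Carathéodory theorem, the non-tangential limit of $w/\phi(w)$ then exists and is finite, with value at most $1/c$.

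\emph{Sufficiency.} Suppose $\phi$ has finite angular derivative at $\infty$. Then Julia's inequality gives $\re\phi(w)\ge c\,\re w$ with $c>0$. I would transfer the problem to the disk using the isomorphism $J$ in \eqref{isomorphism}. Under $J$, $C_\phi$ corresponds to a weighted composition operator on $\Berd$: writing $\psi$ for the conjugate of $\phi$ by the Cayley map, this operator is $M_u C_{\psi}$. The multiplier is $u(z)=\bigl((1-\psi(z))/(1-z)\bigr)^{\alpha+2}$ up to constants. Every composition operator $C_\psi$ is bounded on $\Berd$ by Littlewood subordination. So it suffices to control $u$. Near $z=1$ the angular derivative hypothesis makes $(1-\psi(z))/(1-z)$ tend non-tangentially to a finite nonzero limit. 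I expect the real obstacle to be that $u$ need not be bounded tangentially, so bounding $\|M_uC_\psi\|$ cannot rely on a naive sup bound.

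To get around this, I would bound $|u|$ using the inequality rather than the limit. In half-plane terms the required estimate is
\[\left|\frac{w+1}{\phi(w)+1}\right|^{\alpha+2}\times(\text{Jacobian factor})\ \lesssim\ \text{Carleson weight}.\]
I would prove it by a change of variables together with the estimate that $\phi^{-1}$ of a box of side $h$ lies in a region where $\re w\le h/c$, with controlled measure. Concretely, I would verify that $\nu(Q)\lesssim|Q|^{\alpha+2}$ by splitting $\phi^{-1}(Q)$ dyadically in $\re w$ and using $\re\phi\ge c\re w$ to cap each piece. This Carleson estimate is the step I expect to be hardest; everything else is standard.
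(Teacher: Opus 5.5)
The paper gives no proof of this statement; it is quoted from Elliott--Wynn \cite[Theorem 3.4]{Elliot-Wynn}, so your argument has to stand on its own. Your necessity half does. From $C_\phi^*k^\alpha_w=k^\alpha_{\phi(w)}$ and $\|k^\alpha_w\|^2=2^\alpha(\alpha+1)(2\re w)^{-(\alpha+2)}$ you get $\re\phi(w)\ge c\,\re w$ with $c>0$, and the half-plane Julia--Carath\'eodory theorem finishes.

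The sufficiency half has a genuine gap. The box estimate $\nu(Q)\lesssim h^{\alpha+2}$ is the entire content of that direction. The only input you bring to it, $\re\phi\ge c\,\re w$, cannot produce it. That inequality only places $\phi^{-1}(Q)$ inside the strip $\{0<\re w<h/c\}$, which has infinite $x^\alpha\,dx\,dy$-measure. A dyadic splitting in $\re w$ says nothing about how far $\phi^{-1}(Q)$ spreads in the imaginary direction, or how many times $\phi$ covers $Q$ (already $w+1/w$ is two-to-one on $\mathbb C_+$). For instance, the non-holomorphic map $x+iy\mapsto x$ satisfies $\re\phi=\re w$. Yet for $Q=\{0<x<h,\ |y|<h\}$ its preimage is the whole strip $\{0<\re w<h\}$, so $\nu(Q)=\infty$. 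Holomorphy must therefore enter essentially, and your sketch never says how. Passing to $M_uC_\psi$ on the disk only moves the difficulty to the unbounded multiplier $u$, as you note yourself.

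One way to supply the missing ingredient is to reduce to $H^2(\mathbb C_+)$ by slicing. Let $\chi$ be a holomorphic self-map of $\mathbb C_+$ with $\re\chi(\zeta)\ge c\,\re\zeta$, and set $\chi_0=\chi-c\,\mathrm{id}$, so that $\re\chi_0\ge 0$. Then
\[\frac{1}{c(z+\bar w)}-\frac{1}{\chi(z)+\overline{\chi(w)}}=\frac{1}{c}\cdot\frac{\chi_0(z)+\overline{\chi_0(w)}}{z+\bar w}\cdot\frac{1}{\chi(z)+\overline{\chi(w)}}\]
is a Schur product of positive kernels, the middle factor by the Herglotz--Pick theorem. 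Hence $\|C_\chi\|^2_{H^2(\mathbb C_+)}\le 1/c$.

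Now fix $s>0$ and apply this to $\chi_s(\zeta)=\phi(\zeta+s)-cs$, which satisfies $\re\chi_s(\zeta)\ge c\,\re\zeta$ by Julia's inequality for $\phi$, and to $f(\cdot+cs)\in H^2(\mathbb C_+)$. Since $f(\phi(\zeta+s))=f(cs+\chi_s(\zeta))$, you get
\[\int_{\mathbb R}|f(\phi(s+iy))|^2\,dy\le\frac{1}{c}\int_{\mathbb R}|f(cs+iy)|^2\,dy .\]
Multiply by $s^\alpha$, integrate over $s>0$, and substitute $x=cs$. This gives $\|C_\phi f\|^2\le c^{-(\alpha+2)}\|f\|^2$, that is, $\|C_\phi\|\le\phi'(\infty)^{(\alpha+2)/2}$ for every $\alpha>-1$, with no Carleson-box geometry at all.
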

In general, the norm of composition operators are not completely understood and we mainly have upper bounds for it, however, the norm of $C_{\phi}$ is precisely given by $\|C_{\phi}\|=(\phi^{\prime}(\infty))^{(\alpha+2)/2}$ on $\Ber.$ The only linear fractional symbols that induce bounded composition operators are the symbols of the form
\begin{equation}\label{symbols}
    \phi(w)=aw+b, \quad \text{where $a>0$ and $\re(b)\geq 0$,}
\end{equation}
which for simplicity we will call them \emph{affine symbols} and operators induced by them \emph{affine composition operators}. Among the affine symbols, we may categorize them in the following manner
\begin{df*}
\textit{Let $\phi$ be an affine symbol:
\begin{itemize}
    \item If $a=1$ we say that $\phi$ is of parabolic type. If additionally $\re(b)=0,$ we say that $\phi$ is a parabolic automorphism.
    \item If $a\neq 1$ we say that $\phi$ is of hyperbolic type. If additionally $\re(b)=0,$ we say that $\phi$ is a hyperbolic automorphism.
    \item If $\phi$ is of hyperbolic type, we say that $\phi$ is of type I if $a \in (0,1)$ and $\phi$ is of type II if $a\in(1,\infty).$
\end{itemize}
}
\end{df*}
If $\phi$ is as \eqref{symbols}, we may compute the iterates of $\phi$ as follows
\begin{align}\label{iterates}
\phi^{[n]}(w)= \begin{cases}w+nb, & \text{if} \ a=1, \\ 
a^{n} w+\frac{\left(1-a^{n}\right)}{1-a}b,  & \text{if} \ a \neq 1, 
\end{cases}
\end{align}
hence $C^{n}_{\phi}=C_{\phi^{[n]}}.$ By this iterate behavior we observe
\begin{lem}\label{convergencesymbol}
Let $\phi$ be an affine symbol such that $a \in (0,1).$ Then $\phi^{[n]}\rightarrow b/(1-a)$ locally uniformly as $n \rightarrow \infty.$
\end{lem}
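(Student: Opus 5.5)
The plan is to use the explicit formula for the iterates in \eqref{iterates}. Put $w_0=b/(1-a)$. Since $a\neq 1$, we have
\[\phi^{[n]}(w)=a^{n}w+\frac{1-a^{n}}{1-a}\,b = w_0 + a^{n}\left(w-w_0\right).\]
We get this by rewriting $\frac{1-a^n}{1-a}b=w_0-a^nw_0$. Equivalently, $w_0$ is the unique fixed point of $\phi$ in $\mathbb C$, and $\phi(w)-w_0=a(w-w_0)$; iterating this identity gives the formula directly.

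Now fix a compact set $K\subset\mathbb C_+$ and let $M=\sup_{w\in K}|w|<\infty$. For $w\in K$,
\[\left|\phi^{[n]}(w)-w_0\right| = a^{n}|w-w_0|\le a^{n}\left(M+|w_0|\right).\]
Because $a\in(0,1)$, $a^n\to 0$, so the right-hand side tends to $0$ uniformly in $w\in K$. This is exactly locally uniform convergence of $\phi^{[n]}$ to the constant $w_0=b/(1-a)$.

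There is no real obstacle here; the only point needing care is the algebraic rewriting of the iterate about the fixed point. We also note, for later use, that $\re w_0=\re b/(1-a)\ge 0$. So the limit lies in $\overline{\mathbb C_+}$, and it lies on the boundary precisely when $\phi$ is an automorphism. The lemma itself does not need this, since the convergence is in $\mathbb C$.
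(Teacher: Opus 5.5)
Your proof is correct and takes the same approach as the paper, which gives no separate proof and states the lemma as an immediate consequence of the iterate formula \eqref{iterates}. Your rewriting $\phi^{[n]}(w)=w_0+a^{n}(w-w_0)$, together with the bound $a^{n}(M+|w_0|)$ on a compact set, supplies exactly the details the paper omits.
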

In \cite[Lemma 3.7]{Blois-Severiano}, the authors were able to show that the composition operator $C_{\phi}$ is unitarily equivalent to the weighted composition operator $\widehat{C}_{\phi}$ in $L^2_\alpha$ given by
\begin{equation}\label{equivalenceOperator}
    (\widehat{C}_{\phi}F)(t)=\dfrac{1}{a}e^{-bt}F(t/a), \quad F \in L^2_\alpha.
\end{equation}
\section{Supercyclicity}\label{supercyclicity}

We devote this section to the study of supercyclicity among affine composition operators on $\Ber.$ Here we correct the proof of \cite{Kumar} and further extend the characterization of supercyclicity for the weighted Bergman space and we start the section by stating its main result.
\begin{thm}\label{mainsuper}
Let $\phi$ be an affine symbol. Then $C_{\phi}$ is supercyclic on $\Ber$ if and only if $\phi$ is hyperbolic of type II.
\end{thm}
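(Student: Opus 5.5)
The plan is to work entirely in the Laplace-transform model. By \eqref{equivalenceOperator}, $C_\phi$ is unitarily equivalent to $\widehat C_\phi$ on $L^2_\alpha$, and supercyclicity is invariant under unitary equivalence. Iterating \eqref{equivalenceOperator}, or using $C_\phi^n=C_{\phi^{[n]}}$ together with \eqref{iterates}, gives
\[(\widehat C_\phi^{\,n}F)(t)=a^{-n}e^{-b_nt}F(t/a^n),\qquad b_n=\frac{1-a^n}{1-a}\,b \ (a\neq 1),\quad b_n=nb\ (a=1).\]
The proof splits into sufficiency for type II and necessity in each remaining case.

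\emph{Sufficiency ($a>1$, $\re b>0$).} I will apply the Supercyclicity Criterion. Take $D_1=D_2$ to be the functions in $L^2_\alpha$ with compact support in $(0,\infty)$; this set is dense. Define the right inverse $S$ of $\widehat C_\phi$ by $(SG)(s)=a\,e^{abs}G(as)$, so that $\widehat C_\phi S=I$ on $D_2$. Then
\[(S^nG)(s)=a^n e^{\beta_n s}G(a^ns),\qquad \beta_n=b(a+a^2+\dots+a^n).\]
Let $F$ and $G$ be supported in $[r,R]\subset(0,\infty)$.
\begin{itemize}
\item $\widehat C_\phi^{\,n}F$ is supported in $[a^nr,a^nR]$. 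There $|e^{-b_nt}|\le e^{-\re(b_n)a^nr}$, and $\re b_n\asymp a^n$, so $\|\widehat C_\phi^{\,n}F\|$ decays like $e^{-ca^{2n}}$ up to a factor $a^{O(n)}$ coming from the weight $t^{-\alpha-1}$.
\item $S^nG$ is supported in $[r/a^n,R/a^n]$. There $|e^{\beta_ns}|\le e^{|b|\frac{a}{a-1}R}$ is bounded, so $\|S^nG\|\le Ca^{Kn}$ after the change of variables in the weight.
\end{itemize}
Hence $\|\widehat C_\phi^{\,n}F\|\,\|S^nG\|\to0$ for all $F,G\in D_1$, and the Supercyclicity Criterion gives supercyclicity. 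The only computation needed is tracking the powers of $a$ produced by the weight $t^{-\alpha-1}$; these are harmless against the doubly exponential decay.

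\emph{Necessity.} There are three cases.
\begin{itemize}
\item \emph{Parabolic ($a=1$).} Here $\widehat C_\phi$ is multiplication by $e^{-bt}$, which is a normal operator. By Bourdon's theorem that hyponormal operators on infinite-dimensional spaces are never supercyclic, $C_\phi$ is not supercyclic.
\item \emph{Hyperbolic automorphisms ($\re b=0$).} Here $\widehat C_\phi$ is a positive scalar times $e^{-bt}$ times a dilation. The substitution $t\mapsto a^{-1}t$ preserves $t^{-\alpha-1}dt$ up to the constant $a^{-\alpha}$, so $\widehat C_\phi=a^{-1-\alpha/2}U$ with $U$ unitary. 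Thus $\widehat C_\phi$ is normal and again not supercyclic.

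This means that in the statement, ``hyperbolic of type II'' must be read as $a>1$ with $\re b>0$. I expect this caveat about the automorphic case $a>1$, $\re b=0$ to be the point needing care. If the intended definition includes automorphisms, the argument above shows they must be excluded, and I would state this explicitly.
\item \emph{Type I, non-automorphic ($0<a<1$, $\re b>0$).} By Lemma~\ref{convergencesymbol}, the fixed point $w_0=b/(1-a)$ lies in $\mathbb C_+$. Since $C_\phi^*k^\alpha_w=k^\alpha_{\phi(w)}$, we get $C_\phi^*k^\alpha_{w_0}=k^\alpha_{w_0}$. Let $\partial k_{w_0}$ denote the kernel for $f\mapsto f'(w_0)$, which is bounded on $\Ber$. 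Because $(f\circ\phi)'(w_0)=af'(w_0)$, we have $C_\phi^*\partial k_{w_0}=a\,\partial k_{w_0}$. So $\sigma_p(C_\phi^*)\supseteq\{1,a\}$ contains two distinct points. A supercyclic operator's adjoint has at most one eigenvalue, so $C_\phi$ is not supercyclic.
\end{itemize}
These cases exhaust all affine symbols other than type II.
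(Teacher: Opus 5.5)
Your proof is correct. For the two non-automorphic hyperbolic cases it takes a different route from the paper. The parabolic and automorphic cases are handled as in the paper, by normality (the paper cites Hilden--Wallen, you cite Bourdon).

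\emph{Type I.} The paper takes $a_kC_\phi^{n_k}f\to g$, passes to pointwise limits, and uses $\phi^{[n]}\to b/(1-a)$ to conclude that $g$ is constant. You instead obtain two distinct eigenvalues $1$ and $a$ of $C_\phi^*$ from $k^\alpha_{w_0}$ and the derivative kernel at the interior fixed point. You then invoke the standard fact that the adjoint of a supercyclic operator has at most one eigenvalue. Your version is tighter. The paper's ``independent of $w$'' step ignores the possibly unbounded scalars $a_k$ and the case $f(w_0)=0$, where the limit is $c(w-w_0)^m$ rather than a constant. The conclusion survives, since such polynomials are not in $\Ber$, but this needs saying. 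Both arguments extend to any non-automorphic self-map with an interior fixed point.

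\emph{Type II.} The paper asserts a similarity $C_\phi\sim C_{\psi_a}$, transfers through $J$ to the disc map $\varphi_a$, and imports hypercyclicity of $C_{\varphi_a}$ on $\Berd$. You verify the Supercyclicity Criterion directly in $L^2_\alpha$ with compactly supported functions. This is self-contained and avoids both the unproved similarity and the external disc result, at the price of an explicit computation.

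You are also right about the caveat. Under the paper's definition, hyperbolic automorphisms with $a>1$ are of type II, yet they are normal and hence not supercyclic. So the theorem must be read with $\re b>0$, and the paper's own type II argument silently uses $\re b>0$ to get the similarity with $C_{\psi_a}$.

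One bookkeeping fix. The Laplace model is $(\widehat C_\phi F)(t)=a^{-1}e^{-bt/a}F(t/a)$, as in the paper's adjoint computation; formula \eqref{equivalenceOperator} drops the $/a$. Your formula for $\widehat C_\phi^{\,n}$ is also not the $n$-th power of the operator you wrote. With the correct model:
\begin{itemize}
\item $\widehat C_\phi^{\,n}F(t)=a^{-n}e^{-b_nt/a^n}F(t/a^n)$;
\item the right inverse is $(SG)(s)=ae^{bs}G(as)$, with $(S^nG)(s)=a^ne^{b_ns}G(a^ns)$.
\end{itemize}
For $F,G$ supported in $[r,R]$ this gives
\[
\|\widehat C_\phi^{\,n}F\|\le a^{-n(\alpha+2)/2}e^{-\re(b_n)r}\|F\|,\qquad \|S^nG\|\le a^{n(\alpha+2)/2}e^{\re(b)R/(a-1)}\|G\|.
\]
So the decay is $e^{-ca^n}$ rather than $e^{-ca^{2n}}$, the powers of $a$ cancel exactly, and the criterion holds as you claimed.
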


We divide the main result in a few steps, one of the key ingredients is the normality among said operators. By \cite[Proposition 3.8]{Blois-Severiano}, we know that the normality condition holds if and only if $a=1$ or $\re(b)=0.$ In \cite{HW} Hilden and Wallen proved that no normal operator can be supercyclic, so we are able to discard supercyclicity for some of our operators.

\begin{prop}
If $\phi$ is of parabolic type or is a hyperbolic automorphism, then $C_{\phi}$ is not supercyclic on $\Ber.$
\end{prop}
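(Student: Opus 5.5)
The plan is to reduce the statement to the theorem of Hilden and Wallen \cite{HW}: a normal operator on a Hilbert space of dimension greater than one is never supercyclic. Everything then rests on showing that $C_{\phi}$ is normal in both cases covered by the proposition.

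First, recall what the hypotheses mean for the coefficients in \eqref{symbols}. If $\phi$ is of parabolic type, then $a=1$ and $\re(b)\geq 0$. If $\phi$ is a hyperbolic automorphism, then $a\neq 1$ and $\re(b)=0$. In either case, at least one of the conditions $a=1$ or $\re(b)=0$ holds. By \cite[Proposition 3.8]{Blois-Severiano}, $C_{\phi}$ is then a normal operator on $\Ber$.

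If one prefers not to rely only on that citation, normality can be checked directly through the unitary equivalence \eqref{equivalenceOperator}, since normality is preserved under unitary equivalence.
\begin{itemize}
\item When $a=1$, the operator $\widehat{C}_{\phi}$ is multiplication by $e^{-bt}$ on $L^2_\alpha$. Multiplication operators are normal.
\item When $b=i\beta$ with $\beta\in\mathbb R$, write $\widehat{C}_{\phi}=M_{e^{-i\beta t}}D_a$, where $D_aF(t)=\frac1a F(t/a)$. The operator $D_a$ is a scalar multiple of a unitary dilation on $L^2_\alpha$; the measure has density $t^{-\alpha-1}$, so the scalar is a power of $a$. The multiplier $M_{e^{-i\beta t}}$ is unitary. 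So $\widehat{C}_{\phi}$ is a positive scalar times a unitary operator, and hence normal.
\end{itemize}

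Since $\Ber$ is an infinite-dimensional separable Hilbert space, Hilden and Wallen's theorem applies, and $C_{\phi}$ is not supercyclic. There is no real obstacle: the only point needing care is the normality in the case $\re(b)=0$, $a\neq1$. Even there, it suffices to observe that $\widehat{C}_{\phi}^{*}\widehat{C}_{\phi}$ and $\widehat{C}_{\phi}\widehat{C}_{\phi}^{*}$ are both the same scalar multiple of the identity.
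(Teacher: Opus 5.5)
Your proposal is correct and follows the paper's proof: in both cases $a=1$ or $\re(b)=0$ holds, so $C_{\phi}$ is normal by \cite[Proposition 3.8]{Blois-Severiano}, and the theorem of Hilden and Wallen rules out supercyclicity. Your optional direct check of normality through $\widehat{C}_{\phi}$ is also correct. When $a=1$ it is a multiplication operator, and when $\re(b)=0$ it is a positive scalar times a unitary; this works whether the multiplier in \eqref{equivalenceOperator} is taken as $e^{-bt}$ or $e^{-bt/a}$, since either is unimodular when $\re(b)=0$.
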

\begin{proof}
If $\phi$ is a hyperbolic automorphism, then we have $\re(b)=0$ which implies that $C_{\phi}$ is normal and therefore is not supercyclic on $\Ber.$ On the other hand, if $\phi$ is of parabolic type, then $a=1$ which again implies that $C_{\phi}$ is normal and the conclusion holds.
\end{proof}

Now we address the hyperbolic type symbols. Note that if $\phi$ is of type I, we know that $b/(1-a)$ is a fixed point of $\phi$ and we will make use of Lemma \ref{convergencesymbol}

\begin{prop}\label{hypI}
If $\phi$ is hyperbolic of type I, then $C_{\phi}$ is not supercyclic on $\Ber.$
\end{prop}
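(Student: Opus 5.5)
We argue by contradiction, following the classical strategy for symbols with an interior attracting fixed point. Suppose $f\in\Ber$ is a supercyclic vector for $C_\phi$, with $\phi$ hyperbolic of type I, and set $w_0=b/(1-a)$. Since $a\in(0,1)$ and $\re(b)\ge 0$, we have $w_0\in\overline{\mathbb C_+}$. The main case is $\re(b)>0$, where $w_0\in\mathbb C_+$ is a genuine interior fixed point; the case $\re(b)=0$ is a hyperbolic automorphism and was already excluded by the previous proposition via normality.

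\textbf{Step 1: reduce to $f(w_0)\neq 0$.} If $f(w_0)=0$, then every $\lambda C_\phi^n f=\lambda f\circ\phi^{[n]}$ vanishes at $w_0$. Point evaluation at $w_0$ is continuous (it is given by the kernel $k^\alpha_{w_0}$), so the closure of the projective orbit lies in the proper closed subspace $\{g: g(w_0)=0\}$. This contradicts density.

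\textbf{Step 2: normalize and pass to the limit.} Since the projective orbit is dense, we pick any $g\in\Ber$ and find $\lambda_k\in\mathbb C$ and $n_k$ with $\lambda_k f\circ\phi^{[n_k]}\to g$ in norm. Because $\{\lambda f: \lambda\in\mathbb C\}$ is one-dimensional and hence not dense, we may assume $n_k\to\infty$. Norm convergence gives locally uniform convergence, since kernels are uniformly bounded on compacts. By Lemma \ref{convergencesymbol}, $\phi^{[n_k]}\to w_0$ locally uniformly, so $f\circ\phi^{[n_k]}\to f(w_0)\neq 0$ locally uniformly.

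\textbf{Step 3: conclude.} Evaluating at $w_0$ gives $\lambda_k f(w_0)\to g(w_0)$, so $\lambda_k\to\lambda:=g(w_0)/f(w_0)$. Hence $\lambda_k f\circ\phi^{[n_k]}\to \lambda f(w_0)$ pointwise, and $g$ must equal the constant $g(w_0)$. Constants, however, are not in $\Ber$ unless they are zero: a nonzero constant has infinite norm because $\int x^\alpha\,dx\,dy$ diverges over $\mathbb C_+$. So every $g$ in the closure is $0$, which is a contradiction since $\Ber\neq\{0\}$. For instance, choosing $g=k^\alpha_{1}$, which is nonconstant, already suffices.

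The step I expect to need the most care is Step 2: justifying $n_k\to\infty$ and confirming that norm limits identify pointwise. Both follow from the reproducing kernel bound $|h(w)|\le\|h\|\,\|k^\alpha_w\|$.
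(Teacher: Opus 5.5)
Your argument is correct and is essentially the paper's: pointwise convergence through the reproducing kernels and $\phi^{[n_k]}\to b/(1-a)$ by Lemma \ref{convergencesymbol} force every limit of the projective orbit to be a constant in $\Ber$, i.e.\ zero. Your extra steps (ruling out $f(w_0)=0$, showing $\lambda_k\to g(w_0)/f(w_0)$, and sending the boundary case $\re(b)=0$ to the normality proposition) make rigorous what the paper compresses into ``up to a subsequence the left-hand side is independent of $w$''. The one thing to tighten is the justification of $n_k\to\infty$: it holds because every tail $\{\lambda C_\phi^n f: n\ge N,\ \lambda\in\mathbb C\}$ stays dense, since finitely many lines are nowhere dense, and not merely because $\{\lambda f\}$ is a single line.
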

\begin{proof}
Suppose that there exists a supercyclic vector $f \in \Ber$ for $C_{\phi},$ then given any function $g \in \Ber$ there exists a sequence $(a_k)_{k \in \mathbb N} \subset \mathbb C$ and an increasing sequence $(n_k)_{k \in \mathbb N} \subset \mathbb N$ such that
\[\|a_{k}C^{n_k}_{\phi}f-g\|\rightarrow 0, \quad \text{as $k \rightarrow \infty$}.\]
In particular, since $\Ber$ is a reproducing kernel Hilbert space, by the Cauchy-Schwarz inequality we get
\[|a_kf(\phi^{[n_k]}(w))-g(w)|\leq \|a_kC^{n_k}_\phi f-g\|\|k^{\alpha}_{w}\|, \quad w \in \mathbb C_+.\]
Hence we obtain a point-wise convergence
\begin{equation}\label{convsuper}
    \lim_{k \rightarrow \infty}a_kf(\phi^{[n_k]}(w))=g(w), \quad w \in \mathbb C_+.
\end{equation}
By Lemma \ref{convergencesymbol}, we have a local uniform convergence of $\phi^{[n_k]},$ so up to a subsequence we note that the left-hand side of \eqref{convsuper} is independent of $w,$ thus the function $g$ is constant and therefore it is the null function since $\Ber$ does not admit any other constant function. However, this is a contradiction to the assumption that $f$ is a supercyclic vector and we conclude that $C_{\phi}$ does not admit a supercyclic vector on $\Ber.$
\end{proof}
Note that if we take a non-automorphic self-map of $\mathbb C_+$ with a fixed point in $\mathbb C_+$, the proof of Proposition \ref{hypI} still holds. Now, we need only to prove that every hyperbolic of type II symbol induce a supercyclic composition operator on $\Ber.$ In \cite[Theorem 3.20]{Kumar}, it claims that on $H^2(\mathbb C_+)$ a composition operator induced by a hyperbolic of type II symbol is a scalar multiple of a composition operator induced by hyperbolic of type I, which is true to its adjoint as is stated in \cite[Proposition 1]{Noor-Severiano}.
\begin{prop}\label{super}
If $\phi$ is hyperbolic of type II, then $C_{\phi}$ is supercyclic on $\Ber.$   
\end{prop}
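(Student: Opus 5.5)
Throughout, $\phi(w)=aw+b$ with $a>1$, and we take $\re(b)>0$: the hyperbolic automorphism case $\re(b)=0$ gives a normal operator and was already excluded above. By \eqref{equivalenceOperator} it is enough to prove that $T:=\widehat{C}_{\phi}$ is supercyclic on $L^2_\alpha$. The plan is to apply the Supercyclicity Criterion in the following form. Suppose there are dense sets $D_1,D_2\subset L^2_\alpha$ and a map $S:D_2\to D_2$ with $TS=I$ on $D_2$, such that $\|T^nF\|\,\|S^nG\|\to 0$ for all $F\in D_1$ and $G\in D_2$. Then $T$ is supercyclic.

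We will take $D_1=D_2=D$, the continuous functions with compact support in $(0,\infty)$; this set is dense in $L^2_\alpha$. First we compute $T^n$ explicitly. Since $C_\phi^n=C_{\phi^{[n]}}$ and $\phi^{[n]}(w)=a^nw+\beta_n$ with $\beta_n=b(a^n-1)/(a-1)$ by \eqref{iterates}, the substitution $s=a^nt$ in the Laplace integral (or simply iterating \eqref{equivalenceOperator}) gives
\[(T^nF)(s)=a^{-n}e^{-\gamma_n s}F(s/a^n),\qquad \gamma_n=\beta_n/a^n=\frac{b(1-a^{-n})}{a-1}.\]
Thus $\re\gamma_n\geq \re(b)/a>0$ for $n\geq1$, and $\gamma_n\to b/(a-1)$. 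On $D$ the natural right inverse is
\[(SG)(t)=a\,e^{bat}G(at),\qquad (S^nG)(t)=a^ne^{\gamma_n a^n t}G(a^nt).\]
This map preserves $D$, since the support of $G$ is just rescaled by the factor $a^{-n}$. One checks directly that $TS=I$ on $D$.

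Next we estimate both orbits for $F,G$ supported in $[m,M]\subset(0,\infty)$.

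\emph{Forward orbit.} $T^nF$ is supported in $[a^nm,a^nM]$. There $|e^{-\gamma_n s}|\leq e^{-c a^n m}$ with $c=\re(b)/a>0$. Changing variables $s=a^nu$ shows that the remaining factor grows at most like $C^n$ for some constant $C$. Hence $\|T^nF\|\lesssim C^n e^{-c m a^n}$, which decays super-exponentially.

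\emph{Backward orbit.} $S^nG$ is supported in $[m/a^n,M/a^n]$. There $|e^{\gamma_n a^n t}|\leq e^{|\gamma_n|M}$, which is bounded uniformly in $n$. The substitution $t=u/a^n$ then gives $\|S^nG\|\lesssim a^{n(\alpha+2)/2}\|G\|$, so the backward orbit grows only exponentially.

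Therefore $\|T^nF\|\,\|S^nG\|\lesssim (C')^n e^{-cma^n}\to0$, the criterion applies, and $T$, hence $C_\phi$, is supercyclic. This is consistent with the expectation that $C_\phi$ is not hypercyclic: $S^nG$ does not tend to $0$, so only the projective version of the criterion is available.

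The main obstacle is the bookkeeping in these two estimates. One must get the iterate formula for $T^n$ exactly right, in particular the constant $\gamma_n$ and the dilation direction, since an error there would swap which orbit decays. One must also ensure the exponential weight $e^{\gamma_n a^n t}$ in $S^n$ stays bounded on the shrinking supports. The key point is that $\re(b)>0$ makes the decay of $T^nF$ double-exponential, $e^{-ca^n}$, which beats the merely exponential growth $a^{n(\alpha+2)/2}$ of $S^nG$.
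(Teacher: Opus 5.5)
Your argument is correct, but it takes a different route from the paper. The paper never uses the Supercyclicity Criterion. It quotes the known similarity of $C_\phi$ to $C_{\psi_a}$, where $\psi_a(w)=aw+(a-1)$; this similarity, like your argument, needs $\re(b)>0$. It then observes that under the Cayley-type isomorphism $J$ one has $JC_{\varphi_a}J^{-1}=a^{\alpha+2}C_{\psi_a}$ with $\varphi_a(z)=a^{-1}z+1-a^{-1}$, because the weight $\bigl((\psi_a(w)+1)/(w+1)\bigr)^{\alpha+2}$ is the constant $a^{\alpha+2}$. Hypercyclicity of $C_{\varphi_a}$ on $\Berd$ is imported from the literature, and supercyclicity of $C_\phi$ follows by invariance under nonzero scalar multiples and similarity. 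That proof is short, but it rests on an external theorem on the disk.

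Your proof is self-contained and stays in the Paley--Wiener model $L^2_\alpha$. Its mechanism makes the role of $\re(b)>0$ transparent: the forward orbit decays like $e^{-cma^n}$ on functions supported away from $0$, while the backward orbit grows only like $a^{n(\alpha+2)/2}$. The same estimates also give more than supercyclicity. Applying the Hypercyclicity Criterion to $\lambda^nT^n$ and $\lambda^{-n}S^n$ shows that $\lambda C_\phi$ is hypercyclic whenever $|\lambda|>a^{(\alpha+2)/2}$. The converse holds because $\|C_\phi\|=a^{-(\alpha+2)/2}$, and this is consistent with the paper's hypercyclic operator $a^{\alpha+2}C_{\psi_a}$. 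Your estimates also carry over verbatim to $H^2(\mathbb C_+)\cong L^2(\mathbb R^+,dt)$, which the paper handles separately through the disk.

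There is one harmless bookkeeping slip. The $n=1$ case of your $T^n$ formula is $(TF)(s)=a^{-1}e^{-bs/a}F(s/a)$, which is the correct Laplace-side operator. Its right inverse is $(SG)(t)=a\,e^{bt}G(at)$, not $a\,e^{bat}G(at)$. The latter inverts the version with $e^{-bt}$ printed in the preliminaries, so $TS=I$ fails as you wrote it. Your formula for $S^n$ is already the correct one: its $n=1$ case is $a\,e^{bt}G(at)$, since $\gamma_1=b/a$. Hence the estimates are unaffected.
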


\begin{proof}
It is known in the literature that the operator $C_{\phi}$ is similar to $C_{\psi_a},$ where $\psi_a(w)=aw+a(1-a^{-1}).$ Recall the isomorphism given in \eqref{isomorphism}, a straightforward calculation shows that
    \[J(C_{\varphi_af})(w)=\left(\dfrac{\psi_a(w)+1}{w+1}\right)^{\alpha+2}C_{\psi_a}(Jf)(w),\]
where $\varphi_a$ is given by $\varphi_a(z)=a^{-1}z+1-a^{-1}.$ Thus, $C_{\varphi_a}$ is similar to a $a^{\alpha+2}C_{\psi_a}.$ By \cite[Proposition 5.2]{FRhyper}, it follows that $C_{\varphi_a}$ is hypercyclic on $\Berd.$ Via the isomorphism $J,$ we get that $a^{\alpha+2}C_{\psi_a}$ is hypercyclic on $\Ber$ and in particular it is supercyclic. We use a known fact that if a linear operator is supercyclic, any non-null scalar multiple is supercyclic, from which we conclude that $C_{\psi_a}$ is supercyclic and by similarity, $C_{\phi}$ is supercyclic, completing the proof.
\end{proof}
The proofs above can be adapted to the Hardy space $H^2(\mathbb C_+)$ setting by simply using the characterization of normality given in \cite[Theorem 2]{Noor-Severiano}, the fixed point argument also holds with the same arguments and finally by using the canonical isomorphism between $H^{2}(\mathbb D)$ and $H^2(\mathbb C_+)$ (see \cite{Noor-Carmo} for a discussion in this subject) and the fact that $C_{\varphi_a}$ is hypercyclic in $H^{2}(\mathbb D)$ (see \cite{GG-MRbook} or \cite{Shapiro}). Therefore, the classical cyclic phenomena for the Hardy space $H^2(\mathbb C_+)$ is now completely characterized for affine composition operators, completing the work of Noor and Severiano \cite{Noor-Severiano}.
\section{$n$-supercyclicity}\label{nsupercyclicity}
In this section we completely characterize $n$-supercyclicity for affine composition operators on $\Ber.$ For this goal we first characterize hyponormality among these operators
\begin{df}
Let $T$ be a linear operator on a Hilbert space $\mathcal{H}.$ We say that $T$ is hyponormal if $T^*T \geq TT^*.$
\end{df}
\begin{lem}\label{hypo}
    Let $\phi$ be an affine symbol. Then $C_{\phi}$ is hypornormal if and only if $a \leq 1$ or $\re(b)=0.$
\end{lem}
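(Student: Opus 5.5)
The plan is to transfer the problem to $L^2_\alpha$ through the unitary equivalence \eqref{equivalenceOperator}. Since unitary equivalence preserves adjoints, products and operator order, $C_\phi$ is hyponormal on $\Ber$ if and only if $T:=\widehat{C}_{\phi}$ is hyponormal on $L^2_\alpha$. The advantage is that both $T^*T$ and $TT^*$ should turn out to be multiplication operators, so the operator inequality reduces to a pointwise inequality between two functions.

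First I compute $T^*$. Write $d\mu_\alpha=c\,t^{-\alpha-1}dt$ with $c=\pi\Gamma(1+\alpha)2^{-\alpha}$, and take $F,G\in L^2_\alpha$. Then
\[\langle TF,G\rangle=\int_0^\infty \tfrac1a e^{-bt}F(t/a)\overline{G(t)}\,c\,t^{-\alpha-1}dt .\]
The substitution $t=as$ turns this into
\[\int_0^\infty e^{-abs}F(s)\overline{G(as)}\,c\,a^{-\alpha-1}s^{-\alpha-1}ds .\]
Hence
\[(T^*G)(s)=a^{-\alpha-1}e^{-a\overline{b}s}G(as).\]

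Composing the two formulas gives
\[(T^*TF)(s)=a^{-\alpha-2}e^{-2a\,\re(b)\,s}F(s),\qquad (TT^*F)(t)=a^{-\alpha-2}e^{-2\re(b)\,t}F(t).\]
So $T^*T-TT^*$ is multiplication by
\[m(t)=a^{-\alpha-2}\bigl(e^{-2a\re(b)t}-e^{-2\re(b)t}\bigr).\]

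Next I use the standard fact that a multiplication operator $M_m$ with bounded real symbol satisfies $M_m\geq 0$ if and only if $m\ge 0$ almost everywhere. The direction "$m\ge0$ implies $M_m\ge0$" is immediate from $\langle M_mF,F\rangle=\int m|F|^2d\mu_\alpha$. For the converse, suppose $m<0$ on a set of positive measure. Then I test $\langle M_mF,F\rangle$ on $F=\chi_E$, where $E\subset\{m<0\}$ has positive finite $\mu_\alpha$-measure and stays away from $0$. This gives a negative value, so $M_m\not\ge 0$. This step is the only one requiring care, and it is routine.

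Finally I analyse the sign of $m$. Since $\re(b)\ge 0$ and $t>0$, we have $m(t)\ge0$ if and only if $a\re(b)t\le \re(b)t$, that is, $\re(b)(a-1)\le 0$. This holds for all $t>0$ exactly when $\re(b)=0$ or $a\le 1$. Otherwise, when $a>1$ and $\re(b)>0$, we get $m<0$ everywhere on $\mathbb R^+$. This proves the lemma.

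The main obstacle is carrying out the change of variables correctly, so that the weight $t^{-\alpha-1}$ produces the right power of $a$. Even there the constant $a^{-\alpha-2}$ cancels out of the sign condition, so an error in that power would not affect the conclusion.
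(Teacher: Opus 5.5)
Your proof is correct and follows the paper's route: you transfer to $L^2_\alpha$, identify $T^*T$ and $TT^*$ as multiplication operators, and compare the exponentials pointwise. You also prove two things the paper takes for granted or cites, namely the adjoint formula and the fact that $M_m\ge 0$ iff $m\ge 0$ a.e. Your multipliers are the paper's ones rescaled by $t\mapsto at$ because you used \eqref{equivalenceOperator} as printed, with $e^{-bt}$; the Laplace transform of $f(aw+b)$ actually gives $e^{-bt/a}$, which is what the paper uses in its adjoint computation. Since $\re(b/a)=0$ exactly when $\re(b)=0$, the sign condition and the conclusion are unaffected.
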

\begin{proof}
By the Paley-Wiener theorem, $\Ber$ is isometrically isomorphic to $L^{2}_{\alpha}$ and $C_{\phi}$ is equivalent to a weighted composition operator $\widehat{C}_{\phi}$ given by equation \eqref{equivalenceOperator} and also by \cite[Proposition 3.8]{Blois-Severiano} we have the following
    \[(\widehat{C}_{\phi}^*\widehat{C}_{\phi}F)(t)=a^{-(\alpha+2)}e^{-2\re(b)t}F(t) \quad \text{and} \quad (\widehat{C}_{\phi}\widehat{C}_{\phi}^*F)(t)=a^{-(\alpha+2)}e^{-2\re(b)t/a}F(t),\]
    therefore the condition of $\widehat{C}_{\phi}^*\widehat{C}_{\phi} \geq \widehat{C}_{\phi}\widehat{C}_{\phi}^*$ is equivalent to
    \[e^{-2\re(b)t}\geq e^{-2\re(b)t/a}.\]
The above relation is valid if and only if $\re(b)=0$ or $a\leq1.$
\end{proof}
 In \cite[Theorem 3.2]{BMhyponormal}, Bayart and Matheron proved that no hyponormal operator can be $n$-supercyclic. Thus with the characterization of hyponormality, we are able to conclude the following
\begin{thm}
Let $\phi$ be an affine symbol. Then $C_{\phi}$ is $n$-supercyclic on $\Ber$ if and only if $\phi$ is hyperbolic of type II.
\end{thm}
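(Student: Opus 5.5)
The argument has two directions, and both reduce to results already in hand.

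For the sufficiency direction, suppose $\phi$ is hyperbolic of type II. Proposition \ref{super} shows that $C_{\phi}$ is supercyclic on $\Ber$. The span of a supercyclic vector $f$ is a one-dimensional subspace whose projective orbit is dense. To get an $n$-dimensional subspace $E$ containing $f$, I adjoin any $n-1$ further vectors that are linearly independent of $f$ (possible since $\Ber$ is infinite dimensional). The set $\{\lambda T^k e\}$ over $e\in E$ contains the projective orbit of $f$, so it is still dense. Hence $C_{\phi}$ is $n$-supercyclic for every $n\geq 2$, which is the remark made in the introduction.

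For the necessity direction, I split into cases according to the type of $\phi$. In each case I will show that $C_\phi$ is hyponormal and then invoke \cite[Theorem 3.2]{BMhyponormal}.
\begin{itemize}
\item If $\phi$ is parabolic, then $a=1$.
\item If $\phi$ is hyperbolic of type I, then $a<1$.
\item If $\phi$ is a hyperbolic automorphism, then $\re(b)=0$.
\end{itemize}
In all three cases the hypothesis of Lemma \ref{hypo} holds ($a\le 1$ or $\re(b)=0$), so $C_{\phi}$ is hyponormal. By \cite[Theorem 3.2]{BMhyponormal} it is not $n$-supercyclic for any $n$. The only affine symbols left are the hyperbolic type II symbols with $\re(b)>0$, and these are already covered by the first paragraph. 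Note that a type II automorphism, with $a>1$ and $\re(b)=0$, is also hyponormal by Lemma \ref{hypo}.

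The main obstacle is exactly this type II automorphism case. Here normality (\cite[Proposition 3.8]{Blois-Severiano}) already excludes supercyclicity, which seems to contradict Proposition \ref{super} as stated. I would therefore read ``hyperbolic of type II'' in the theorem as implicitly requiring $\re(b)>0$, i.e.\ non-automorphic, consistent with the exclusion of hyperbolic automorphisms in Section \ref{supercyclicity}. With that reading, the dichotomy splits exactly along Lemma \ref{hypo}: $C_\phi$ is hyponormal if and only if $a\le1$ or $\re(b)=0$. That condition is precisely the complement of the class where Proposition \ref{super} gives supercyclicity, and the proof closes.
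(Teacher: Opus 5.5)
Your proof is correct and follows the paper's route: Lemma \ref{hypo} combined with \cite[Theorem 3.2]{BMhyponormal} gives necessity, and supercyclicity from Proposition \ref{super} gives sufficiency. Your point about type II automorphisms ($a>1$, $\re(b)=0$) is also right, and the paper glosses over it: these operators are normal, hence hyponormal, and the similarity to $C_{\psi_a}$ used in Proposition \ref{super} requires $\re(b)>0$, so ``hyperbolic of type II'' must be read as non-automorphic for this theorem (and for Theorem \ref{mainsuper}) to hold.
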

\begin{proof}
By Lemma \ref{hypo}, every symbol except a hyperbolic of type II is hyponormal and therefore cannot be $n$-supercyclic. By Theorem \ref{mainsuper}, only hyperbolic of type II symbols are $n$-supercyclic since they are supercyclic.
\end{proof}
\section{Cyclicity}\label{cyclicity}
In this section, we characterize cyclicity among affine composition operators. Some of the techniques used in this section are adapted arguments from \cite{GG-MRAdjoints,Noor-Severiano}. We start this section by presenting an adjoint formula for the affine composition operators only in terms of operators in $\Ber.$ 
\begin{prop}
Let $\phi$ be an affine symbol. Then the adjoint of $C_{\phi}$ acting on $\Ber$ is given by
\[C^{*}_{\phi}=a^{-(\alpha+2)}C_{\psi},\]
where $\psi(w)=a^{-1}w+a^{-1}\overline{b}.$
\end{prop}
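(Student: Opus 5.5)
The plan is to verify the identity on reproducing kernels and then extend it by density. First, $C_\psi$ is bounded on $\Ber$. Indeed, $\psi(w)=a^{-1}w+a^{-1}\overline{b}$ has $a^{-1}>0$ and $\re(a^{-1}\overline{b})=a^{-1}\re(b)\geq 0$, so $\psi$ is itself an affine symbol of the form \eqref{symbols}. By the Elliot--Wynn boundedness criterion, $C_\psi$ is then bounded. Both sides of the claimed identity are therefore bounded operators on $\Ber$. Since the reproducing kernels $\{k^\alpha_w : w\in\mathbb C_+\}$ span a dense subspace, it suffices to check that the two sides agree on every $k^\alpha_w$.

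The key steps are as follows.

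\emph{Action of the adjoint on kernels.} For any bounded $C_\phi$, the reproducing property gives
\[\langle f, C_\phi^* k^\alpha_w\rangle=\langle C_\phi f,k^\alpha_w\rangle=f(\phi(w))=\langle f,k^\alpha_{\phi(w)}\rangle \quad \text{for all } f\in\Ber.\]
Hence $C_\phi^*k^\alpha_w=k^\alpha_{\phi(w)}=k^\alpha_{aw+b}$.

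\emph{Action of $C_\psi$ on kernels.} Write $c_\alpha=2^\alpha(\alpha+1)$. Then
\[(C_\psi k^\alpha_w)(s)=\frac{c_\alpha}{\left(a^{-1}s+a^{-1}\overline{b}+\overline{w}\right)^{\alpha+2}}=\frac{c_\alpha}{\left(a^{-1}\left(s+\overline{aw+b}\right)\right)^{\alpha+2}}=a^{\alpha+2}\,k^\alpha_{aw+b}(s),\]
where we used $\overline{aw+b}=a\overline{w}+\overline{b}$, which holds because $a$ is real.

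Combining the two steps gives $a^{-(\alpha+2)}C_\psi k^\alpha_w=k^\alpha_{aw+b}=C^*_\phi k^\alpha_w$ for every $w\in\mathbb C_+$. By linearity the two operators agree on the span of the kernels, and by density and continuity they agree on all of $\Ber$.

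The only delicate point is the step $\left(a^{-1}\zeta\right)^{\alpha+2}=a^{-(\alpha+2)}\zeta^{\alpha+2}$ for non-integer $\alpha$. The power is taken with the principal branch on $\mathbb C_+$, and here $\zeta=s+a\overline{w}+\overline{b}$ lies in $\mathbb C_+$. Multiplying $\zeta$ by the positive real number $a^{-1}$ leaves its argument unchanged, so the branches match and the factorization is legitimate.

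As a cross-check, the formula can also be read off the model \eqref{equivalenceOperator}. Computing the adjoint of the weighted composition operator $\widehat{C}_\phi$ in $L^2_\alpha$ gives
\[(\widehat{C}_\phi^*G)(t)=a^{-(\alpha+1)}e^{-\overline{b}at}G(at).\]
Compare this with $\widehat{C}_\psi G(t)=a\,e^{-a^{-1}\overline{b}\,t}G(at)$, taking the weight $t^{-(\alpha+1)}$ of $\mu_\alpha$ into account. This comparison confirms the same relation, but the kernel argument above is the cleaner route.
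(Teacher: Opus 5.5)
Your kernel argument is correct and complete, and it takes a different route from the paper. The paper works on the Laplace side. It transports $C_\phi$ to the weighted composition operator $\widehat C_\phi F(t)=a^{-1}e^{-bt/a}F(t/a)$ on $L^2_\alpha$ and computes its adjoint $a^{-(\alpha+1)}e^{-\overline{b}t}G(at)$, where the weight of $\mu_\alpha$ produces the power of $a$. It then identifies this with the model of $a^{-(\alpha+2)}C_\psi$, leaving that identification as a ``straightforward calculation''.

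You stay inside $\Ber$, using four ingredients:
\begin{itemize}
\item the identity $C_\phi^*k^\alpha_w=k^\alpha_{\phi(w)}$;
\item a one-line computation of $C_\psi k^\alpha_w$, with the branch of the power handled correctly;
\item boundedness of $C_\psi$ from the Elliott--Wynn criterion;
\item density of the kernels.
\end{itemize}
Your route is more self-contained: it needs neither the Paley--Wiener isomorphism nor any bookkeeping with $\mu_\alpha$. It also uses only that $k^\alpha_w$ is a constant multiple of $(s+\overline{w})^{-(\alpha+2)}$, so it is unaffected by the normalizing constant in front of the kernel. (With the norm as written, that constant should in fact carry an extra factor $1/\pi$.) The paper's route instead yields the explicit form of $\widehat C_\phi^*$ on $L^2_\alpha$, which fits the model it uses elsewhere, e.g.\ for hyponormality.

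Your closing ``cross-check'' does not check out as written, and you should fix or drop it. From your two displayed formulas, $a^{-(\alpha+2)}\widehat C_\psi G(t)=a^{-(\alpha+1)}e^{-a^{-1}\overline{b}t}G(at)$, whereas $\widehat C_\phi^*G(t)=a^{-(\alpha+1)}e^{-a\overline{b}t}G(at)$. The exponentials disagree unless $a=1$ or $b=0$. Appealing to the weight $t^{-(\alpha+1)}$ cannot repair this, since the weight was already used in computing the adjoint.

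The source of the mismatch is the formula in \eqref{equivalenceOperator}, which you took at face value. Substituting $u=at$ in $f(aw+b)=\int_0^\infty F(t)e^{-(aw+b)t}\,dt$ shows that the correct model is $\widehat C_\phi F(t)=a^{-1}e^{-bt/a}F(t/a)$, which is the version the paper actually uses in its proof. With it, $\widehat C_\phi^*G(t)=a^{-(\alpha+1)}e^{-\overline{b}t}G(at)$ and $\widehat C_\psi G(t)=a\,e^{-\overline{b}t}G(at)$, which agree up to the factor $a^{-(\alpha+2)}$. Your main argument does not rely on this aside, so this is not a gap in the proof.
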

\begin{proof}
We know that if $\phi$ is an affine symbol, then $C_{\phi}$ is unitarily equivalent to the operator
\[(\widehat{C}_{\phi}F)(t)=\dfrac{1}{a}e^{-bt/a}F(t/a)\]
acting on $L^2_{\alpha}.$ Now the adjoint in $L^{2}_{\alpha}$ is given by
\[(\widehat{C}^{*}_{\phi}F)(t)=a^{-(\alpha+1)}e^{-\overline{b}t}F(at).\]
A straightforward calculation shows that
\[\widehat{C}^{*}_{\phi}\mathcal{L}=\mathcal{L}(a^{-(\alpha+2)}C_{\psi}),\]
where $\psi(w)=a^{-1}w+a^{-1}\overline{b}.$
\end{proof}
Its clear that if $C_{\phi}$ is induced by a hyperbolic of type I symbol, its adjoint is a scalar multiple of a composition operator induced by a hyperbolic of type II symbol and vice versa. We first characterize cyclicity for operators induced by hyperbolic type symbols. In \cite[Proposition 2.7]{Bourdon-Shapiro}, Bourdon and Shapiro proved that if the adjoint of a linear operator $T$ on a Hilbert space $\mathcal{H}$ has a multiple eigenvalue, then $T$ is not cyclic.
\begin{prop}
If $\phi$ is hyperbolic of type I, then $C_{\phi}$ is not cyclic on $\Ber.$
\end{prop}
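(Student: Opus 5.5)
The plan is to use the Bourdon--Shapiro criterion quoted just above: it suffices to exhibit an eigenvalue of $C_\phi^*$ whose eigenspace has dimension at least two. I will work in the Paley--Wiener model, where $C_\phi$ is unitarily equivalent to $\widehat{C}_\phi$ on $L^2_\alpha$ with
\[(\widehat{C}^{*}_{\phi}F)(t)=a^{-(\alpha+1)}e^{-\overline{b}t}F(at),\]
as in the proof of the adjoint formula. Here $0<a<1$. The case $\re(b)>0$ and the case of the hyperbolic automorphism $\re(b)=0$ behave differently, so I split into two cases.

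\emph{Case $\re(b)>0$.} I look for eigenfunctions of the form $F(t)=e^{\gamma t}t^{\beta}h(t)$. Substituting, the exponential factors cancel exactly when $\gamma(a-1)=\overline{b}$, that is, $\gamma=-\overline{b}/(1-a)$, so that $\re\gamma<0$. If $h$ satisfies $h(at)=h(t)$, the eigenvalue equation then reduces to
\[\widehat{C}^*_\phi F=a^{-(\alpha+1)}a^{\beta}F.\]
For $h$ I take $h_k(t)=\exp\bigl(2\pi i k\log t/\log a\bigr)$ with $k\in\mathbb{Z}$. These are unimodular and multiplicatively $a$-periodic, and the functions $e^{\gamma t}t^\beta h_k$ are linearly independent for distinct $k$. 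Integrability in $L^2_\alpha$ amounts to
\[\int_0^\infty e^{2\re(\gamma)t}\,t^{2\re\beta-\alpha-1}\,dt<\infty.\]
This integral converges at $\infty$ because $\re\gamma<0$, and at $0$ as soon as $\re\beta>\alpha/2$. Fixing such a $\beta$ gives an eigenvalue of $\widehat{C}^*_\phi$, hence of $C_\phi^*$, with infinite-dimensional eigenspace. Bourdon--Shapiro then shows $C_\phi$ is not cyclic.

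\emph{Case $\re(b)=0$.} This is where I expect the main obstacle, because the construction above fails: $|e^{\gamma t}|=1$, and no multiplicatively periodic $|h|$ makes $\int|h|^2t^{c}\,dt$ finite. So I argue directly that $\widehat{C}_\phi$ is not cyclic.

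First, by \cite[Proposition 3.8]{Blois-Severiano}, $U:=a^{(\alpha+2)/2}\widehat{C}_\phi$ is unitary, and scaling does not affect cyclicity. Next, conjugating by the unimodular multiplication operator $F\mapsto e^{\gamma t}F$, with $\gamma$ as above and now purely imaginary, removes the factor $e^{-bt/a}$. This leaves a normalized dilation $F(t)\mapsto cF(t/a)$. The substitution $t=e^{x}$, with the weight absorbed, turns it into translation by $-\log a$ on $L^2(\mathbb{R})$. After the Fourier transform, that is multiplication by $e^{i\xi\log a}$ on $L^2(\mathbb{R})$.

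This multiplication operator has uniform infinite spectral multiplicity, since the map $\xi\mapsto e^{i\xi\log a}$ covers each point of the circle countably many times. A unitary operator is cyclic only if it has multiplicity one. Concretely, $L^2(\mathbb{R})$ splits into the infinitely many summands $L^2(I_j)$, where the $I_j$ are fundamental intervals of length $2\pi/|\log a|$, and a cyclic vector would have to generate all of them simultaneously through a single scalar spectral measure, which is impossible. Hence $C_\phi$ is not cyclic in this case either.

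The step needing the most care is this unitary-equivalence chain and the multiplicity argument for the automorphism case. If that route proves awkward, I would fall back on showing directly that the commutant of $U$ is non-abelian; a unitary with non-abelian commutant cannot be cyclic.
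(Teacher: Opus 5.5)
Your proof is correct and takes essentially the paper's route. In Case 1 your eigenvectors $e^{\gamma t}t^{\beta+2\pi i k/\log a}$ are, up to the constants $\Gamma(\beta_k+1)$, the inverse Laplace transforms of the paper's eigenfunctions $f_{\lambda_n}(w)=(w+\bar b/(1-a))^{\lambda_n}$ of $C_\psi=a^{\alpha+2}C_\phi^*$, with $\lambda=-\beta-1$, so that $\re\beta>\alpha/2$ corresponds to $\re\lambda<-(\alpha+2)/2$, followed by the same Bourdon--Shapiro step. Your Case 2 is needed because the paper's definition of type I includes $\re(b)=0$, while its proof of this proposition silently assumes $\re(b)>0$. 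That case reproduces the paper's separate hyperbolic-automorphism proposition: a unimodular conjugation removes $b$, the dilation is transformed into multiplication by $a^{-it}$, and that multiplication operator is shown to be non-cyclic.
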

\begin{proof}
Take the function 
\[f_{\lambda}(w)=\left(w+\dfrac{b}{a-1}\right)^{\lambda}\]
where $a > 1$ and $\re(b)>0.$ Is easy to see that $f_{\lambda} \in \Ber$ if $\re(\lambda) < \dfrac{-(\alpha+2)}{2}.$ We want to show that functions of the form $f_{\lambda}$ are eigenvectors for a composition operator induced by a hyperbolic of type II symbol. Let $\psi(w)=aw+b$ denote a hyperbolic of type II symbol, thus
\[(C_{\psi}f_{\lambda})(w)=\left(aw+b+\dfrac{b}{a-1}\right)^{\lambda}=a^{\lambda}f_\lambda(w).\]
Now we show that $a^{\lambda}$ is an eigenvalue of infinite multiplicity. Let $\lambda_n=\lambda+\frac{2\pi n}{\log a}i$ for $n \in \mathbb N$ and we get
\[C_{\psi}f_{\lambda_n}=a^{\lambda_n}f_{\lambda_n}=a^{\lambda}f_{\lambda_n}, \quad \forall \, n \in \mathbb N.\]
Since $(f_{\lambda_n})_{n \in \mathbb N}$ is an infinite family of functions, follows that $a^{\lambda}$ is an eigenvalue of infinite multiplicity for $C_{\psi}.$ Therefore the adjoint of $C_{\phi}$ has a multiple eigenvalue and thus $C_{\phi}$ cannot be cyclic.
\end{proof}
Now, from Proposition \ref{super}, its clear that if $\phi$ is hyperbolic of type II then $C_{\phi}$ is cyclic on $\Ber.$ Now we address the automorphic case, which we adapt techniques used by Gallardo-Gutiérrez and Montes-Rodríguez in \cite{GG-MRAdjoints}.
\begin{prop}
If $\phi$ is a hyperbolic automorphism, then $C_{\phi}$ is not cyclic on $\Ber.$
\end{prop}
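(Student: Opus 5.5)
The plan is to use the unitary equivalence \eqref{equivalenceOperator} with $L^2_\alpha$ and show that, after further unitary changes of variables, $C_\phi$ becomes a scalar multiple of a translation on $L^2(\mathbb R)$. Via the Fourier transform this is multiplication by a periodic unimodular function, i.e.\ a normal operator of infinite spectral multiplicity, and such an operator cannot be cyclic. I do not use the Bourdon--Shapiro eigenvalue argument here. The natural eigenfunctions of the adjoint are $(w-w_0)^\lambda$ with $w_0=-ib/(a-1)\in i\mathbb R$, but these never lie in $\Ber$: after an imaginary translation, which is unitary, $\int|w|^{2\re\lambda}x^\alpha\,dx\,dy$ needs $2\re\lambda+\alpha+2>0$ near $0$ and $<0$ near $\infty$. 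So I replace eigenvectors by a spectral-multiplicity argument.

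\textbf{Reduction to a translation.} Write $\phi(w)=aw+ib$ with $a\neq 1$ and $b\in\mathbb R$. By passing to $C_\phi^{-1}=C_{\phi^{-1}}$ if necessary, which has the same cyclic vectors' spans closure issue avoided by noting normality and $C_\phi$ being invertible (one could also treat $a<1$ symmetrically), I assume $a>1$. The operator $(\widehat C_\phi F)(t)=a^{-1}e^{-ibt}F(t/a)$ acts on $L^2(\mathbb R^+,t^{-\alpha-1}dt)$. The substitution $t=e^s$ combined with $G(s)=e^{-\alpha s/2}F(e^s)$ is a unitary map (up to a constant) onto $L^2(\mathbb R,ds)$. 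It carries $\widehat C_\phi$ to
\[
G\mapsto c_a\,e^{-ibe^s}G(s-\log a)
\]
for some constant $c_a>0$. Next I remove the unimodular factor by conjugating with multiplication by $h(s)=\exp\!\big(-ib\,e^s\,a/(a-1)\big)$, which is unimodular because $b$ is real. A direct check gives $h(s)/h(s-\log a)=e^{-ibe^s}$, so $M_h^{-1}(\cdot)M_h$ turns the operator into $c_a\tau$, where $\tau G(s)=G(s-\log a)$. This telescoping choice of $h$ is the key step; it is exactly where $\re(b)=0$ is used.

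\textbf{Non-cyclicity of a translation.} Under the Fourier transform, $\tau$ becomes multiplication by $m(\xi)=e^{-i\xi\log a}$ on $L^2(\mathbb R)$. This function is periodic with period $p=2\pi/\log a$. Folding $\mathbb R$ into $[0,p)$ identifies $L^2(\mathbb R)$ with $L^2([0,p);\ell^2(\mathbb Z))$, and under this identification $\tau$ acts as $m(\xi)\otimes I_{\ell^2}$.

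For any $F$, the closed span of $\{m^nF\}$ is contained in the closed subspace
\[
\{\xi\mapsto g(\xi)F(\xi)\;:\; g \text{ scalar measurable}\}\cap L^2.
\]
Every element of this subspace has fiber lying in the one-dimensional line $\mathbb C F(\xi)\subset\ell^2(\mathbb Z)$ for a.e.\ $\xi$. A vector whose fibers are, on a set of positive measure, orthogonal to $F(\xi)$ cannot be approximated by elements of this subspace. Hence no vector is cyclic for $\tau$. The same then holds for $c_a\tau$, and by unitary equivalence for $C_\phi$.

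\textbf{Main obstacle.} The delicate point is the bookkeeping in the first change of variables: the correct weight power, and the sign and direction of the shift, so that the geometric series defining $h$ converges. If the series diverges for the chosen direction, I will instead use $h(s)=\exp\!\big(ib\,e^s/(a-1)\big)$, or equivalently work with the inverse automorphism. The multiplicity argument is standard; if preferred, it can be replaced by citing the fact that a cyclic normal operator has multiplicity one, which is a consequence of the spectral theorem.
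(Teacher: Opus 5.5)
Your proposal is correct and is essentially the paper's argument. Your unimodular gauge $M_h$ in the variable $s=\log t$ is the Laplace-side form of the paper's conjugation by the imaginary translation $C_{\tau}$, and your weighted substitution $t=e^{s}$ followed by the Fourier transform is the paper's map $T$ followed by the Mellin transform. Your fibrewise multiplicity argument then proves directly the non-cyclicity of multiplication by a periodic unimodular function, which the paper instead cites from Noor--Severiano.

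The reduction to $a>1$ is not justified as written, but it is unnecessary. The worry about a ``geometric series'' is also unfounded: $h(s)=\exp(i\gamma e^{s})$ gives $h(s)/h(s-\log a)=\exp\big(i\gamma(1-a^{-1})e^{s}\big)$ for every $a\neq 1$, so you simply solve for $\gamma$. Note that direct computation gives $\widehat{C}_{\phi}F(t)=a^{-1}e^{-ibt/a}F(t/a)$, not the paper's displayed $a^{-1}e^{-ibt}F(t/a)$, so the correct choice is $\gamma=-b/(a-1)$, not your $-ab/(a-1)$ or your fallback $b/(a-1)$. The multiplicity argument itself only uses $\log a\neq 0$.
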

\begin{proof}
If $\phi$ is a hyperbolic automorphism, then $a\neq 1$ and $\re(b)=0.$ We consider the parabolic automorphism given by
\[\tau(w)=w+\dfrac{b}{1-a}\]
and we note that $C^{-1}_{\tau}C_{\phi}C_{\tau}=C_{\varphi},$ where $\varphi(w)=aw.$ Therefore $C_{\phi}$ is similar to $C_{\varphi}$ and by Laplace transform we obtain that $C_{\varphi}$ is equivalent to $\widehat{C}_{\varphi}$ as in \eqref{equivalenceOperator}. 

Now define the linear operator given by
\[(TF)(t)=\left(\dfrac{\pi\Gamma(1+\alpha)}{2^{\alpha}t^{\alpha}}\right)^{1/2}F(t)\]
and we note that the operator $T$ induces an isometry from $L^2_{\alpha}$ onto $L^2(\mathbb R^{+},dt/t).$ In particular, one can show that
\[T\widehat{C}_{\varphi}=V_aT,\]
where $V_a$ is a linear operator on $L^{2}(\mathbb R^+,dt/t)$ given by
\[(V_aF)(t)=a^{\frac{-(\alpha+2)}{2}}F(t/a).\]
If we consider the Fourier transform with respect to the multiplicative group of positive real numbers, namely
\[\hat{F}(t)=\dfrac{1}{2\pi}\int_{0}^{\infty}F(x)x^{-it}\dfrac{dx}{x},\]
it defines an isometry from $L^2(\mathbb R^+,dt/t)$ onto $L^2(\mathbb R, 2\pi dt)$ and the operator $V_a$ is unitarily similar to the operator $a^{\frac{-(\alpha+2)}{2}}M_{\eta},$ where $M_\eta$ is the multiplication operator with symbol $\eta(t)=a^{-it},$ acting on $L^2(\mathbb R,2\pi dt).$ So far we showed that the operator $C_{\phi}$ is unitarily similar to $a^{\frac{-(\alpha+2)}{2}}M_{\eta}$ and now suffices to show that $M_{\eta}$ is not cyclic. We may rewrite $\eta$ in the following fashion
\[a^{-it}=e^{\log a^{-it}}=e^{-it\log a}.\]
In \cite[Lemma 8]{Noor-Severiano}, Noor and Severiano showed that a multiplication operator induced by a symbol of the form $e^{ist},$ with $s \in \mathbb R,$ is not cyclic on $L^2(\mathbb R, 2\pi dt)$ and hence $C_{\phi}$ cannot be cyclic on $\Ber.$
\end{proof}

Finally, we characterize cyclicity among composition operators induced by symbols of parabolic type
\begin{prop}
If $\phi$ is of parabolic type, then $C_{\phi}$ is cyclic on $\Ber$ if and only if $\re(b)>0.$
\end{prop}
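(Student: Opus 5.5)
We treat the two cases $\re(b)=0$ and $\re(b)>0$ separately, and in both we work with the unitarily equivalent operator $\widehat{C}_{\phi}$ on $L^2_\alpha$ from \eqref{equivalenceOperator}. With $a=1$ it is the multiplication operator
\[(\widehat{C}_{\phi}F)(t)=e^{-bt}F(t), \quad F\in L^2_\alpha .\]
So the question is whether the multiplication operator $M_{e^{-bt}}$ is cyclic on $L^2(\mathbb R^+,d\mu_\alpha)$.

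\emph{Case $\re(b)=0$ (parabolic automorphism).} Write $b=is$ with $s\in\mathbb R$. Then $\widehat{C}_\phi$ is multiplication by $e^{-ist}$ on $L^2(\mathbb R^+,d\mu_\alpha)$. I would first move to an unweighted space. The map $F\mapsto (\pi\Gamma(1+\alpha)2^{-\alpha}t^{-\alpha-1})^{1/2}F$ is unitary onto $L^2(\mathbb R^+,dt)$ and commutes with multiplication operators. Extending by zero, $L^2(\mathbb R^+,dt)$ becomes a reducing subspace of $L^2(\mathbb R,dt)$ for $M_{e^{-ist}}$.

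I would then adapt the argument of \cite[Lemma 8]{Noor-Severiano}. The function $e^{-ist}$ is $2\pi/|s|$-periodic, and $\mathbb R^+$ contains infinitely many disjoint translates of one period interval $I$. Any cyclic vector $F$ would make the span of $\{e^{-inst}F\}$ dense. Periodization then gives a unitary equivalence of the restriction to $\bigcup_k (I+2\pi k/s)$ with multiplication by $e^{-ist}$ on $L^2(I;\ell^2)$, which has infinite multiplicity. An operator with infinite spectral multiplicity is not cyclic.

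A more elementary route gives the same conclusion. Every element of the closed span of the orbit is of the form $h(t)F(t)$, with $h$ a $2\pi/|s|$-periodic function. Choose $G$ supported on $I\cup(I+2\pi/|s|)$ with $G(t)/F(t)$ not periodic on that set; such a $G$ cannot be approximated. If $s=0$ the operator is the identity, which is clearly not cyclic. This case is the main place where care is needed: a limit of $h_nF$ must again be of the form $hF$, and I would check this by restricting to the two translated intervals and comparing the two copies.

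\emph{Case $\re(b)>0$.} Here I would show that $C_\phi$ is cyclic with cyclic vector $k^\alpha_1$. In $L^2_\alpha$ this vector corresponds to $F_0(t)=c\,t^{\alpha+1}e^{-t}$. It is cleaner to argue on $\Ber$ directly. The orbit is $C_\phi^n k^\alpha_1(w)=c\,(w+nb+1)^{-(\alpha+2)}$.

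Suppose $g\in\Ber$ is orthogonal to all of these. Each orbit element is a multiple of the reproducing kernel $k^\alpha_{w_n}$ with $\overline{w_n}=1+nb$, that is $w_n=1+n\overline b$. Orthogonality therefore gives $g(1+n\overline b)=0$ for all $n$.

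These points lie on a ray in $\mathbb C_+$ with real parts $1+n\re(b)\to\infty$. I would now transfer the zeros to the disk via the isomorphism $J$ in \eqref{isomorphism}. The points $z_n=(w_n-1)/(w_n+1)$ satisfy $1-|z_n|\asymp 1/n$, so $\sum(1-|z_n|)=\infty$. This Blaschke-type sum alone does not force $g=0$ in Bergman spaces. The decisive extra information is that the $z_n$ approach the single point $1$ nontangentially at rate $1/n$. A Müntz–Szász type argument applies here: $z_n$ are zeros, at points $1+n\overline b$ in arithmetic progression, of the Laplace transform $G(w)=\int_0^\infty F(t)e^{-wt}dt$. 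Equivalently, writing $x=e^{-\overline b t}$ and $\lambda=1+n\overline b$, the function $e^{-t}F(t)$ is orthogonal to all the functions $e^{-n\overline b t}$. By a Stone–Weierstrass or Müntz argument on $\mathbb R^+$, polynomials in $e^{-\overline b t}$ times $e^{-t}$ are dense in $L^2_\alpha$. This density would force $F=0$, so $C_\phi$ is cyclic.

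I expect the density step to be the main obstacle when $\im(b)\neq0$. It amounts to the claim that the functions $e^{-(1+n b)t}$ span a dense subspace of $L^2_\alpha$. My first attempt would be the substitution $x=e^{-\re(b)t}$ together with a Müntz theorem for complex exponents. Since $\sum 1/n=\infty$, the Müntz condition holds.
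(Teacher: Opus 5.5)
Your route differs from the paper's, which simply applies Seid's theorem to $e^{-bt}$. That theorem says a multiplication operator on $L^2(\mu)$ is cyclic iff its symbol is essentially injective, and $e^{-bt}$ is injective on $\mathbb R^+$ exactly when $\re(b)>0$. Your $\re(b)=0$ half is a hands-on proof of the relevant half of that theorem and is fine: closedness of $\{hF:\ h \text{ periodic}\}$ follows by passing to a.e.\ convergent subsequences on each fiber $(t+kT)_k$.

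The gap is in the $\re(b)>0$ half. Its whole content is the uniqueness claim that $g\in\Ber$ with $g(1+n\overline b)=0$ for all $n$ must vanish. You leave this as the expected obstacle, and the density statement you reduce it to is not the right one:
\begin{itemize}
\item The orbit of $k^\alpha_1$ in $L^2_\alpha$ is $c\,t^{\alpha+1}e^{-(1+nb)t}$, not $e^{-(1+nb)t}$.
\item For $\alpha\ge 0$, neither $e^{-t}$ nor $e^{-(1+nb)t}$ even lies in $L^2_\alpha$, since $t^{-\alpha-1}$ is not integrable at $0$.
\item You are conflating the $L^2_\alpha$ inner product with the $dt$-pairing. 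What $g(1+n\overline b)=0$ actually says is $\int_0^\infty u(t)e^{-n\overline b t}\,dt=0$ for $u=e^{-t}F$, and you need $u=0$. Here $u\in L^1$, since $\int_0^\infty |F|e^{-t}\,dt\le(\int_0^\infty|F|^2t^{-\alpha-1}dt)^{1/2}(\int_0^\infty t^{\alpha+1}e^{-2t}dt)^{1/2}$.
\item Stone--Weierstrass cannot give $u=0$ when $\im(b)\neq 0$, because the algebra generated by $e^{-\overline b t}$ is not closed under conjugation.
\end{itemize}

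The missing observation is that you dismissed the Blaschke condition too early. All the zeros lie in $\{\re w\ge 1\}$, and $g$ is bounded there, since $|g(w)|\le\|g\|\,\|k^\alpha_w\|=C\|g\|(\re w)^{-(\alpha+2)/2}$. So $h(w)=g(w+1)$ belongs to $H^\infty(\mathbb C_+)$ and vanishes at $n\overline b$ for $n\ge 1$. But
\[\sum_{n\ge 1}\frac{\re(n\overline b)}{1+|n\overline b|^2}=\sum_{n\ge 1}\frac{n\re(b)}{1+n^2|b|^2}=\infty,\]
which violates the half-plane Blaschke condition. Hence $g\equiv 0$ and $k^\alpha_1$ is cyclic.

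Your complex M\"untz idea is this same fact in Mellin coordinates. Alternatively, $t\mapsto e^{-\overline b t}$ maps $[0,\infty]$ homeomorphically onto an arc (a logarithmic spiral from $1$ to $0$). By Lavrentiev's theorem, polynomials in $e^{-\overline b t}$ are then dense in $C([0,\infty])$, which forces $u=0$.

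With this step inserted your proof is complete. Unlike the paper's one-line citation, it is self-contained and constructive: the same argument shows every reproducing kernel $k^\alpha_w$ is a cyclic vector.
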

\begin{proof}
By equation \eqref{equivalenceOperator}, we have that $C_{\phi}$ in unitarily equivalent to $\widehat{C}_{\phi},$ which is given by 
\[(\widehat{C}_{\phi}F)(t)=e^{-bt}F(t)\]
and we may rewrite $e^{-bt}$ as
\[e^{-\re(b)t}e^{-i\im(b)t}.\]
By \cite[Theorem 3.3]{Seid}, $\widehat{C}_{\phi}$ is cyclic if and only $e^{-bt}$ is essentially univalent. Is easy to see that $e^{-bt}$ is essentially univalent if and only if $\re(b)>0$ (otherwise it would a periodic function), thus the result holds. 
\end{proof}

\section*{Acknowledgments}
This study was financed in part by the Coordenação de Aperfeiçoamento de Pessoal de Nível Superior – Brasil (CAPES) – Finance Code 001.

\printbibliography
\end{document}